\theoremstyle{plain}
\newtheorem{theorem}{Theorem}[section]
\theoremstyle{plain}
\newtheorem{proposition}[theorem]{Proposition}
\theoremstyle{plain}
\newtheorem{lemma}[theorem]{Lemma}
\theoremstyle{plain}
\theoremstyle{plain}
\newtheorem{definition}[theorem]{Definition}
\theoremstyle{plain}
\theoremstyle{remark}
\theoremstyle{remark}
\theoremstyle{remark}
\title
[Property A and ONL]{Property A and the operator norm localization property for discrete metric spaces}
\author{Hiroki Sako}
\thanks{The author is a Research Fellow of the Japan Society for the Promotion of Science}
\address
{Research Institute for Mathematical Sciences, Kyoto University, Kyoto 606-8502, Japan}
\email
{sako@kurims.kyoto-u.ac.jp}
\subjclass[2010]{46L99, 51F99}
\begin{document}
\begin{abstract}
We study 
property A defined by G. Yu 
and the operator norm localization property 
defined by X. Chen, R. Tessera, X. Wang, and G. Yu.
These are coarse geometric properties for metric spaces, 
which have applications to operator K-theory.
It is proved that the two properties are equivalent 
for discrete metric spaces with bounded geometry.
\end{abstract}

\keywords{Coarse geometry; Property A; Metric space; Uniform Roe algebra}

\maketitle

\section{Introduction}
Coarse geometry is the study of large scale uniform structure on a space, 
which is related to operator K-theory.
In this paper
we investigate the following coarse geometric properties on metric spaces: property A 
and the operator norm localization property.
It is proved that these two properties 
are equivalent for metric spaces with bounded geometry.

\textit{Property A}
was introduced by G. Yu in \cite[Definition 2.1]{Yu:CoarseHilbert}.
A discrete metric space is said to have 
if the space satisfies a condition
regarding amenability.
Yu \cite{Yu:CoarseHilbert} proved that the property guarantees
the coarse Baum--Connes conjecture for metric space.
Most interesting case is 
when the metric space is given by a discrete group.
The Novikov higher signature conjecture holds for every discrete group $\Gamma$
with the property \cite[Theorem 1.1]{HigsonBivariant}. 
Property A for a discrete group can be characterized by exactness of
the reduced group C$^*$-algebra.
This follows from theorems by 
 Anantharaman-Delaroche and Renault \cite{ADR},
 Higson and Roe \cite{HigsonRoe},
 and Ozawa \cite{OzawaExactness}.

X. Chen, R. Tessera, X. Wang, and G. Yu introduced 
\textit{the operator norm localization property} in 
\cite[Definition 2.2 and Definition 2.3]{ONLPoriginal}.
They applied this notion to prove
that the coarse geometric Novikov conjecture holds
for certain sequences of expanders
\cite[Theorem 7.1]{ONLPoriginal}.

Property A and 
the operator norm localization property (ONL) have the following common features:
\begin{itemize}
\item
Finiteness of  asymptotic dimension implies these properties.
We refer to \cite{HigsonRoe} (property A) and \cite{ONLPoriginal} (ONL).

\item
If two groups have property A (resp.\ ONL), 
then an amalgamated free product has property A (resp.\ ONL).
We refer to \cite{BoundaryAme} (property A) and \cite{ONLPoriginal} (ONL). 

\item
If a group $\Gamma$ is a hyperbolic relative to subgroups with property A (resp.\ ONL), 
then $\Gamma$ has property A (resp.\ ONL).
We refer to \cite{BoundaryAme} (property A) and \cite{ONLPrelativehyp} (ONL). 

\item
Every countable subgroup of the general linear group over a field has these properties.
We refer to \cite{AforLinear} (property A) and \cite{ONLPlinear} (ONL).

\item
A sequence of expander graphs is an counterexample for these properties.
We refer to \cite{ONLPoriginal} (ONL).
\end{itemize}

A C$^*$-algebra $C^*_u(X)$ called uniform Roe algebra
is associated to a metric space $X$ with bounded geometry.
Skandalis, Tu, and Yu \cite[Theorem 5.3]{SkandalisTuYu} proved that $X$ has property A 
if and only if the uniform Roe algebra is nuclear
(see also Roe \cite[Proposition 11.41]{RoeLectureNote}             
and Brown--Ozawa \cite[Theorem 5.5.7]{Ozawa:Book}).    
Nuclearity can be characterized by a finite-dimensional approximation property
(Choi--Effros \cite{ChoiEffrossNuclearCPAP}, Kirchberg \cite{KirchbergNuclear}).
To obtain the equivalence between property A and the operator norm localization property,
we manipulate approximations by completely positive maps on the uniform Roe algebra.

In the last section, we make comments on a work by Brodzki, 
Niblo, \v{S}pakula, Willett, and Wright. By the main theorem in this paper and their result,
it turns out that two properties (MSP and ULA$_\mu$) are equivalent to property A.

\section{Preliminaries}
\subsection{Metric space with bounded geometry and uniform Roe algebra}
We fix some notations on a metric space $(X, d)$ and recall 
the definition of the uniform Roe algebra $C^*_u(X)$. 
For $S > 0$ and $x \in X$, we denote by $N(x, S)$ the closed ball
$\{y \in X \ |\ d(x, y) \le S\}$.
\begin{definition}
We say that $(X, d)$ has {\rm bounded geometry}, if
$\mathrm{sup}_{x \in X} |N(x, S)| < \infty$ for all $S > 0$.
\end{definition}
We note that a metric space with bounded geometry is a discrete space.
For a bounded linear operator $a$ on the Hilbert space $\ell_2(X)$,
we write $a_{y, z}$ for the matrix coefficient 
$\langle a \delta_z , \delta_y\rangle \in \mathbb{C}$.
The operator $a$ 
has the expression $a = [a_{y,z}]_{y,z \in X}$. 
We define 
the \textit{propagation} of $a$ by 
\begin{eqnarray*}
\mathrm{sup} \{ d(y, z) \ |\ y, z \in X, a_{y,z} \neq 0 \}.
\end{eqnarray*}
Let $E_R$ be the set of all the operators on 
$\ell_2(X)$ whose propagations are at most $R$.
The union $\cup E_R$ is a $*$-subalgebra of $\mathbb{B}(\ell_2(X))$. 
See the book \cite{RoeLectureNote} for details.
\begin{definition}
The C$^*$-algebra defined by the closure
$C^*_u(X) = \overline{\bigcup_{R > 0} E_R}^{\ \textrm{norm}}$
is called the {\rm uniform Roe algebra} of $X$.
\end{definition}

\subsection{Property A}
The definition of property A was given by G. Yu.
\begin{definition}[Definition 2.1 of \cite{Yu:CoarseHilbert}]
A discrete metric space $(X, d)$ is said to have {\rm property A} if for
every $R > 0$ and $\epsilon > 0$, there exist $S > 0$ and finite subsets 
$A_x \subseteq X \times \mathbb{N}$ labeled by $x \in X$ such that
\begin{itemize}
\item
if $d(y, z) \le R$, then $ | A_y \bigtriangleup A_z | < \epsilon | A_y \cap A_z |$,
where $A_y \bigtriangleup A_z$ stands for the symmetric difference of $A_y$ and $A_z$,
\item
and $A_x \subseteq N(x, S) \times \mathbb{N}$.
\end{itemize}
\end{definition}

Instead of the definition, we use the conditions
$(\ref{DefOfA;Vectors})$ and $(\ref{DefOfA;Kernel})$ in the next proposition.
\begin{proposition}
[Proposition 3.2 of \cite{Tu:CharacterizaitionOfA}]\label{DefOfA}
For a metric space $X$ with bounded geometry, the following conditions are equivalent:
\begin{enumerate}
\item\label{DefOfA;Original}
The space $X$ has property A,
\item\label{DefOfA;Vectors}
For every $\epsilon > 0$ and $R > 0$, there exist $S > 0$ 
and unit vectors $\{ \xi_x \}_{x \in X} \subseteq \ell_2(X)$
such that
\begin{itemize}
\item
${\rm if \ } d(y, z) \le R, {\rm \ then\ } \|\xi_y - \xi_z \| < \epsilon$,
\item
${\rm and } \ \mathrm{supp}(\xi_x) \subseteq N(x, S) {\rm \ for\ every\ } x \in X$,
\end{itemize}
\item\label{DefOfA;Kernel}
For every $\epsilon > 0$ and $R > 0$, there exist $S > 0$ 
and a positive definite kernel $k \colon X \times X \rightarrow \mathbb{C}$
such that  
\begin{itemize}
\item
$k(x, x) = 1 {\rm \ for\ every\ } x \in X$,
\item
${\rm if \ } d(y, z) \le R, {\rm \ then\ } |1 - k(y, z)| < \epsilon $,
\item
${\rm and \ if \ } d(y, z) > S, {\rm \ then\ }  k(y, z) = 0$.
\end{itemize}
\end{enumerate}
\end{proposition}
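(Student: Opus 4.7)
The plan is to prove the two equivalences (1)$\Leftrightarrow$(2) and (2)$\Leftrightarrow$(3) separately, using the hub (2). Bounded geometry enters in two places: to make certain matrices into bounded operators on $\ell_2(X)$ via the Schur test, and to guarantee that the supports $\operatorname{supp}(\xi_x)$ are finite so that a discretization step produces finite sets $A_x$.

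For (1)$\Rightarrow$(2) the natural construction is $\xi_x(y) := (|A_x \cap (\{y\}\times\mathbb{N})|/|A_x|)^{1/2}$. Then $\|\xi_x\|_2 = 1$ and $\operatorname{supp}(\xi_x) \subseteq N(x,S)$; the elementary inequality $|\sqrt{a}-\sqrt{b}|^2 \le |a-b|$ gives $\|\xi_y - \xi_z\|_2^2 \le \|f_y - f_z\|_1$, where $f_x := |\xi_x|^2$, which is in turn controlled by the quantity $|A_y \triangle A_z|/|A_y \cap A_z|$ appearing in the definition of property A. The converse (2)$\Rightarrow$(1) runs this backwards with a quantization. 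First replace $\xi_x$ by $|\xi_x|$ (which only decreases $\|\xi_y - \xi_z\|$). For a large integer $M$, put $a_x(y) := \lfloor M \xi_x(y)^2 \rfloor$ and $A_x := \{(y,n) : 1 \le n \le a_x(y)\}$. Each $A_x$ is finite because $|\operatorname{supp}(\xi_x)| \le |N(x,S)|$ is uniformly bounded. Routine inequalities of the form $|a_y(w) - a_z(w)| \le M|\xi_y(w)^2 - \xi_z(w)^2| + 1$ with Cauchy--Schwarz give $|A_y \triangle A_z| \lesssim M \|\xi_y - \xi_z\|$, while $|A_y \cap A_z| \approx M$ when $M$ is large enough relative to the uniform bound on $|\operatorname{supp}(\xi_x)|$.

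For (2)$\Rightarrow$(3) put $k(y,z) := \langle \xi_y, \xi_z\rangle$: positive definiteness is automatic, $k(x,x) = 1$, $|1 - k(y,z)| = |\langle \xi_y, \xi_y - \xi_z\rangle| \le \|\xi_y - \xi_z\| < \epsilon$, and $d(y,z) > 2S$ forces $k(y,z) = 0$ by disjointness of supports. The converse (3)$\Rightarrow$(2) is the most delicate step. I view the matrix $K := [k(y,z)]_{y,z}$ as a self-adjoint positive operator on $\ell_2(X)$; the Schur test with $|k(y,z)| \le 1$ and bounded geometry gives $\|K\| \le \sup_x |N(x,S)|$. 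By Weierstrass I approximate $t \mapsto \sqrt{t}$ uniformly on $[0, \|K\|]$ by real polynomials $p_n$, so that $p_n(K)^2 \to K$ in operator norm; each $p_n(K) \in E_{nS}$. Setting $\xi_x := p_n(K)\delta_x / \|p_n(K)\delta_x\|$ for $n$ large enough, these are unit vectors supported in $N(x, nS)$ whose pairwise inner products are arbitrarily close to $k(y,z)$; hence $\|\xi_y - \xi_z\|^2 \le 2|1 - \langle \xi_y, \xi_z\rangle|$ is controlled by $|1 - k(y,z)|$ up to an error that tends to zero with $n$, yielding (2) after matching the $\epsilon$-parameters.

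I expect (3)$\Rightarrow$(2) to be the main obstacle: one has to bound $\|K\|$ uniformly via bounded geometry, control simultaneously the approximation of $\sqrt{K}$ by finite-propagation operators and the renormalization $\xi_x^{(n)} \mapsto \xi_x^{(n)}/\|\xi_x^{(n)}\|$, and verify that neither step spoils the smallness of $\|\xi_y - \xi_z\|$. The other three implications are direct constructions once the quantization scale $M$ in (2)$\Rightarrow$(1) is chosen large relative to the uniform ball bound.
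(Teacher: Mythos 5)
The paper does not prove this proposition at all --- it is quoted verbatim from Tu's paper (Proposition 3.2 of \cite{Tu:CharacterizaitionOfA}) and used as a black box --- so there is no in-paper proof to compare against. Your outline is correct and is essentially the standard argument from the literature: the normalized counting functions for $(1)\Leftrightarrow(2)$, the inner-product kernel $k(y,z)=\langle\xi_y,\xi_z\rangle$ for $(2)\Rightarrow(3)$, and for $(3)\Rightarrow(2)$ the bounded-geometry Schur bound $\|K\|\le\sup_x|N(x,S)|$ followed by polynomial approximation of $\sqrt{K}$ by finite-propagation operators and renormalization of the columns; the only care needed is the quantifier bookkeeping (choose the input $\epsilon$ in $(3)$ of order $\epsilon^2$ since $\|\xi_y-\xi_z\|^2\le 2|1-\langle\xi_y,\xi_z\rangle|$, and in $(2)\Rightarrow(1)$ pick $\epsilon$ small before sending $M\to\infty$ so that $|A_y\cap A_z|\gtrsim M$ survives), all of which you flag.
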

A function $k \colon X \times X \rightarrow \mathbb{C}$ is said to be
positive definite, if
for every $x(1)$, $\cdots$, $x(n)$ $\in X$ and 
$\lambda_1$, $\cdots$, $\lambda_n$ $\in \mathbb{C}$,
the inequality $\sum_{i, j =1}^{n} \overline{\lambda_i} \lambda_j k(x(i), x(j)) \ge 0$ 
holds true.

\subsection{The operator norm localization property}
X. Chen, R. Tessera, X. Wang, and G. Yu defined 
the operator norm localization property
in \cite{ONLPoriginal}. We call the property ``ONL'' in this paper.
The original definition is given for a general metric space $X$.
Let $\nu$ be a positive locally finite Borel measure on $X$ and 
let $\mathcal{H}$ be a separable infinite dimensional Hilbert space.
For an operator $b$ on the Hilbert space 
$L^2((X, \nu), \mathcal{H}) = L^2(X, \nu) \otimes \mathcal{H}$, 
the propagation of $b$ is also defined.
See section $2$ of \cite{ONLPoriginal} for details.
\begin{definition}[Definition 2.2 in \cite{ONLPoriginal}]
Let $\nu$ be a positive locally finite Borel measure on a metric space $X$. 
Let $f$ be a function $f \colon \mathbb{N} \rightarrow \mathbb{N}$.
We say that $(X, \nu)$ has ONL relative to $f$
with constant $0 < c \le 1$, if for every $R \in \mathbb{N}$ and every 
$a \in \mathbb{B}(L^2((X, \nu), \mathcal{H}))$ whose propagation is at most $R$, 
there exists a non-zero vector $\zeta \in L^2((X, \nu), \mathcal{H})$
satisfying $\mathrm{diam}(\mathrm{supp}(\zeta)) \le f(R)$
and $c \|a \| \| \zeta \| \le \|a \zeta \|$.
\end{definition}
\begin{definition}[Definition 2.3 in \cite{ONLPoriginal}]
A metric space $X$ is said to have ONL
if there exists a constant $0 < c \le 1$ and a function 
$f \colon \mathbb{N} \rightarrow \mathbb{N}$ such that
for every positive locally finite Borel measure $\nu$ on $X$, $(X, \nu)$
has ONL relative to $f$
with constant $c$.
\end{definition}
For the rest of this paper, we will concentrate on a metric space $X$ with bounded geometry.
By proposition \ref{properties}, we have only to consider 
the case that $\nu$ is the counting measure on $X$.
An operator $a$ on the Hilbert space 
$\ell_2(X, \mathcal{H}) = \ell_2(X) \otimes \mathcal{H}$
has the expression $a = [a_{y,z}]_{y,z \in X}$, 
where $a_{y,z}$ is an operator on $\mathcal{H}$.
The propagation of $b$ is equal to 
$\mathrm{sup} \{ d(y, z) \ |\ y, z \in X, a_{y,z} \neq 0 \}$.
Denote by $E_R(\mathcal{H})$ the set of the operators on 
$\ell_2(X, \mathcal{H})$ whose propagations are at most $R$.

\section{Characterizations of ONL}
We rephrase the definition of ONL for metric spaces with bounded geometry.
We need a few more notations to state the next proposition.
Denote by $B_S$ the C$^\ast$-algebra 
$\prod_{x \in X} \mathbb{B}(\ell_2(N(x , S)))$,
which is isomorphic to a product of matrix algebras.
An element $b \in B_S$
is a family of matrices $([b^{(x)}_{y,z}]_{y, z \in N(x, S)})_{x \in X}$
labeled by $X$.
For $S > 0$, define a linear map $\psi_S \colon C^\ast _u(X) \rightarrow B_S$ by 
$\psi_S(a) = ([a_{y,z}]_{y,z \in N(x, S)})_{x \in X}$.

\begin{proposition}\label{properties}
Let $X$ be a metric space with bounded geometry. The following
properties on $X$ are equivalent:
\begin{enumerate}
\item\label{onlp}
The space $X$ has ONL,
\item\label{ampexistence}
There exists $0 < c \le 1$ such that for every $R > 0$,
there exists $S > 0$ satisfying the condition
$(\alpha)$:
for every operator $a \in E_R(\mathcal{H})$,
there exists a non-zero vector $\zeta \in \ell_2(X, \mathcal{H})$ such that
$\mathrm{diam} (\mathrm{supp}(\zeta)) \le S$ and
$c \| a \| \| \zeta \| \le \| a \zeta \| ,$
\item\label{ampforall}
For every $0 < c < 1$ and $R > 0$,
there exists $S > 0$ satisfying
$(\alpha)$,
\item\label{withoutamp}
For every $0 < c < 1$ and $R > 0$,
there exists $S > 0$ satisfying the condition
$(\beta)$:
for every operator $a \in E_R$,
there exists a non-zero vector $\xi \in \ell_2(X)$ such that
$\mathrm{diam} (\mathrm{supp}(\xi)) \le S$ and
$c \| a \| \| \xi \| \le \| a \xi \|$,
\item\label{onlpbynorm}
For every $\epsilon > 0$ and $R > 0$,
there exists $S > R$ satisfying
\begin{eqnarray*}
\left\| (\psi_S |_{E_R})^{-1} \colon \psi_S(E_R) \rightarrow E_R \right\| < 1 + \epsilon.
\end{eqnarray*}
\end{enumerate}
\end{proposition}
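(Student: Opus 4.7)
The plan is to show $(1)\Leftrightarrow(2)$ by a measure-identification argument and then close the loop $(2)\Rightarrow(5)\Leftrightarrow(4)\Rightarrow(3)\Rightarrow(2)$, with a constant-boost argument as the central technical step. For $(1)\Leftrightarrow(2)$: since $X$ is discrete and $\nu$ is locally finite, the unitary $f\mapsto\nu(\{\cdot\})^{1/2}f$ identifies $L^2((X,\nu),\mathcal H)$ with $\ell_2(X_\nu,\mathcal H)$, where $X_\nu=\{x:\nu(\{x\})>0\}$; this preserves matrix coefficients, hence propagations and supports. Since $X_\nu$ inherits bounded geometry from $X$, condition~(2) on $X$ transfers to $X_\nu$ (by extending operators by zero and projecting the resulting vector back), yielding~(1); the converse specializes to the counting measure. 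The implication $(3)\Rightarrow(2)$ is immediate.

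The heart of the proof is $(2)\Rightarrow(5)$, in three sub-steps. (i) Given $a\in E_R$, amplify to $a\otimes 1\in E_R(\mathcal H)$ and apply (2) to obtain $\zeta\in\ell_2(X,\mathcal H)$ of support-diameter $\le S$; decomposing $\zeta=\sum_k\xi_k\otimes e_k$ in an orthonormal basis gives $\|(a\otimes 1)\zeta\|^2=\sum_k\|a\xi_k\|^2$, so by pigeonhole some scalar $\xi=\xi_k$ satisfies $\|a\xi\|\ge c_0\|a\|\|\xi\|$ with support contained in $\operatorname{supp}\zeta$. (ii) For any $x_0\in\operatorname{supp}\xi$, taking $S'=S+R$ ensures $N(x_0,S')$ contains both $\operatorname{supp}\xi$ and $\operatorname{supp}(a\xi)$, whence $\|\psi_{S'}(a)\|\ge\|a\xi\|/\|\xi\|\ge c_0\|a\|$ for every $a\in E_R$. (iii) The propagation bound on $a\in E_R$ forces, for every $T>0$ and $x\in X$, the exact identity $[(a^*a)_{y,z}]_{y,z\in N(x,T)}=A^{*}A$ with $A=[a_{w,y}]_{w\in N(x,T+R),\,y\in N(x,T)}$ a submatrix of $\psi_{T+R}(a)_x$; hence
\begin{equation*}
\|\psi_T(a^*a)\|\le\|\psi_{T+R}(a)\|^{2}.
\end{equation*}
Applying sub-step~(ii) to $a^*a\in E_{2R}$ gives a $T$ with $\|\psi_T(a^*a)\|\ge c_0\|a\|^{2}$, and combined with the above identity this yields $\|\psi_{T+R}(a)\|\ge c_0^{1/2}\|a\|$. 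Iterating this substitution $k$ times produces constants $c_0^{1/2^{k}}\to 1$ at the expense of one additional $R$-offset per step, delivering~(5) for every prescribed $\epsilon>0$.

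The equivalence $(5)\Leftrightarrow(4)$ is routine: given (5) and $c<1$, choose $\epsilon$ with $(1+\epsilon)^{-1}>c$ and take the corresponding $S$; a normalized right singular vector $\xi$ of the component $[a_{y,z}]_{y,z\in N(x_0,S)}$ realizing $\|\psi_S(a)\|$, extended by zero, is supported in $N(x_0,S)$ and satisfies $\|a\xi\|\ge\|[a_{y,z}]_{y,z\in N(x_0,S)}\xi\|>c\|a\|\|\xi\|$. The converse is precisely sub-step~(ii), with $c$ taken arbitrarily close to $1$. For $(4)\Rightarrow(3)$ I use a polar-decomposition trick: for $a\in E_R(\mathcal H)$ and $c<1$, pick $\zeta_0\in\ell_2(X,\mathcal H)$ of unit norm with $\|a\zeta_0\|$ close to $\|a\|$, set $\eta_0=a\zeta_0/\|a\zeta_0\|$, and take unit vectors $u_z=\zeta_0(z)/\|\zeta_0(z)\|$, $v_y=\eta_0(y)/\|\eta_0(y)\|$ (arbitrary unit vectors where a denominator vanishes). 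Define the scalar operator $b\in E_R$ by $b_{y,z}=\langle a_{y,z}u_z,v_y\rangle$; then $\|b\|\le\|a\|$, and the test pair $\xi(z)=\|\zeta_0(z)\|$, $\chi(y)=\|\eta_0(y)\|$ satisfies $\langle b\xi,\chi\rangle=\langle a\zeta_0,\eta_0\rangle=\|a\zeta_0\|$, forcing $\|b\|$ close to $\|a\|$. Applying~(4) to $b$ with constant slightly exceeding $c$ yields $\xi$ of small support with $\|b\xi\|\ge c\|b\|\|\xi\|$; putting $\zeta(z)=\xi(z)u_z$ gives $\operatorname{supp}\zeta=\operatorname{supp}\xi$, $\|\zeta\|=\|\xi\|$, and the inequality $\|(a\zeta)(y)\|\ge|\langle(a\zeta)(y),v_y\rangle|=|(b\xi)(y)|$ yields $\|a\zeta\|\ge\|b\xi\|>c\|a\|\|\zeta\|$.

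The main obstacle is the boost in sub-step~(iii): direct spectral manipulations of $a^*a$ via functional calculus (e.g., Chebyshev-type extraction of spectral mass near the top of $\sigma(a^*a)$) degrade the constant by exactly the amount one would hope to gain, and the decisive ingredient is the exact algebraic compression identity above, which permits the genuinely multiplicative substitution $c_0\mapsto c_0^{1/2}$ at each iteration rather than a merely additive one.
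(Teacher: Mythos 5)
Your argument is correct and establishes all five equivalences via the cycle $(1)\Leftrightarrow(2)$, $(2)\Rightarrow(5)\Rightarrow(4)\Rightarrow(3)\Rightarrow(2)$, but two of its components differ genuinely from the paper. First, the constant boost: the paper performs it at the level of vectors in the step $(2)\Rightarrow(3)$, applying condition $(\alpha)$ to the power $(aa^*)^n\in E_{2nR}(\mathcal H)$ and using the telescoping product $c\le\prod_{j=0}^{n-1}\|(aa^*)^{j+1}\zeta\|/\|(aa^*)^{j}\zeta\|$ to extract a witness $a^*(aa^*)^j\zeta$ for the constant $c^{1/n}$ with support diameter $(2n-1)R+S$; this reaches any constant below $1$ in a single application. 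You instead boost at the level of the compression norms $\|\psi_T(\cdot)\|$ via the exact block identity $[(a^*a)_{y,z}]_{y,z\in N(x,T)}=A^*A$, which converts the base estimate for propagation $2R$ into the square-rooted constant for propagation $R$, and you iterate $k$ times starting from propagation $2^kR$. Both are valid; yours is more algebraic and dispenses with intermediate vectors once sub-step (ii) is in place, while the paper's reaches the target constant in one shot with a single support enlargement. Second, the de-amplification: your sub-step (i) tensors with the identity of $\mathcal H$ and pigeonholes over an orthonormal basis using $\|(a\otimes 1)\zeta\|^2=\sum_k\|a\xi_k\|^2$, whereas the paper's $(3)\Rightarrow(4)$ tensors with a rank-one projection $e$ and reads the scalar vector off $(1\otimes e)\zeta$; these are equally short. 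Your $(4)\Rightarrow(3)$ is the paper's argument in all but notation (the isometries determined by $u_z,v_y$ are exactly the paper's $V,W$), and your treatment of general measures in $(1)\Leftrightarrow(2)$ matches the paper's isometric embedding of $L^2((X,\nu),\mathcal H)$ into $\ell_2(X)\otimes\mathcal H$. Two points you leave implicit but should record: the supremum defining $\|\psi_S(a)\|$ over the blocks need not be attained, so in $(5)\Rightarrow(4)$ one picks a block exceeding $c\|a\|$ using the strict inequality, exactly as the paper does; and since $\|\psi_S(a)\|$ is nondecreasing in $S$ (each block of $\psi_S(a)$ is a compression of a block of $\psi_{S'}(a)$ for $S'\ge S$), the radius produced by your iteration can always be enlarged to satisfy the requirement $S>R$ in condition $(5)$.
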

In the next section, 
we will make use of property $(\ref{onlpbynorm})$.

\begin{proof}[Proof of $(\ref{onlp}) \Rightarrow (\ref{ampexistence})$]
If $X$ has ONL,
then $X$ satisfies ONL for the counting measure. This is nothing
but the property $(\ref{ampexistence})$.
\end{proof}

\begin{proof}[Proof of $(\ref{ampexistence}) \Rightarrow (\ref{onlp})$]
Suppose that $X$ has the property $(\ref{ampexistence})$ for a constant $c$.
For an arbitrary $R \in \mathbb{N}$, 
there exists $S = f(R)$ which satisfies $(\alpha)$.
We may choose $S$ from $\mathbb{N}$.
Observe that for every positive measure $\nu$,
the Hilbert space $L^2((X, \nu), \mathcal{H})$ can be identified with 
a closed subspace of 
$\ell_2(X, \mathcal{H}) = \ell_2(X) \otimes \mathcal{H}$.
The inclusion map is
\begin{eqnarray*}
V \colon L^2((X, \nu), \mathcal{H}) \ni \eta 
\longmapsto \Sigma_{x \in X} \nu(\{x\})^{1/2}  \delta_x \otimes \eta(x) \in
\ell_2(X) \otimes \mathcal{H}.
\end{eqnarray*}
Let $a$ be an arbitrary operator on $L^2((X, \nu), \mathcal{H})$ 
whose propagation is at most $R$.
Then the propagation of $V a V^*$ is at most $R$.
By the condition $(\alpha)$, 
there exists a non-zero vector $\zeta \in \ell_2(X, \mathcal{H})$ such that
\begin{eqnarray*}
\mathrm{diam} (\mathrm{supp}(\zeta)) \le S, \quad
c \| V a V^* \| \| \zeta \| \le \| V a V^* \zeta \|.
\end{eqnarray*}
These inequalities imply 
$\mathrm{diam} (\mathrm{supp}(V^* \zeta)) \le S$ and 
$c \| a \| \| V^* \zeta \| \le \| a V^* \zeta \|$.
We conclude that $X$ has ONL.
\end{proof}

The part ``there exists $0 < c \le 1$'' in $(\ref{ampexistence})$ 
can be replaced by ``for every $0 < c < 1$'' in $(\ref{ampforall})$.
Inspired by an idea of \cite[Proposition 2.4]{ONLPoriginal},
we give a complete proof.

\begin{proof}[Proof of $(\ref{ampexistence}) \Rightarrow (\ref{ampforall})$]
Assume that $X$ has the property $(\ref{ampexistence})$ 
with respect to a constant $0 < c < 1$.
For every $R$ and $n \in \mathbb{N}$, 
there exists $S$ satisfying the condition $(\alpha)$ for propagation $2nR$. 
Let $a \in E_R(\mathcal{H})$ be an arbitrary operator of norm $1$.
Since the propagation of $(a a^*)^n$ is at most $2nR$,
there exists a non-zero vector $\zeta \in \ell_2(X, \mathcal{H})$
such that $\mathrm{diam} (\mathrm{supp}(\zeta)) \le S$ and
$c \| (a a^*)^n \| \| \zeta \| \le \| (a a^*)^n \zeta \|$.
Since the norm of $(a a^*)^n$ is $1$, we have
\begin{eqnarray*}
c \le \prod_{j = 0}^{n-1} {\| (a a^*)^{j + 1} \zeta \|} / {\| (a a^*)^{j} \zeta \|}.
\end{eqnarray*}
It follows that there exists $j = 0, 1, \cdots, n - 1$ such that
$c^{1/n} \le \| (a a^*)^{j + 1} \zeta \| / \| (a a^*)^{j} \zeta \| $.
We have the inequality
\begin{eqnarray*}
c^{1/n} \| a \| \| a^* (a a^*)^{j} \zeta \| \le c^{1/n} \| (a a^*)^{j} \zeta \| 
\le \| a (a^* (a a^*)^{j} \zeta) \|.
\end{eqnarray*}
The diameter of $\mathrm{supp}(a^* (a a^*)^{j} \zeta)$ is at most $(2n - 1) R + S$.
It follows that the condition $(\alpha)$ holds for $c^{1/n}$, $R > 0$, and $(2n - 1) R + S$.
We can make the constant $0 < c^{1/n} < 1$ arbitrarily close to $1$, by enlarging $n$.
Hence the property $(\ref{ampforall})$ holds.
\end{proof}

The implication from $(\ref{ampforall})$ to $(\ref{ampexistence})$ is trivial.
We further rephrase the property.
The equivalence between $(\ref{ampforall})$ and $(\ref{withoutamp})$ 
means that the amplification by $\mathcal{H}$ is not necessary.

\begin{proof}[Proof of $(\ref{ampforall}) \Rightarrow (\ref{withoutamp})$]
Assume that $0 < c < 1$, $R > 0$, and $S > 0$ 
satisfy the condition $(\alpha)$. Fix a unit vector $\eta \in \mathcal{H}$. 
Denote by $e$ the rank one projection onto 
$\mathbb{C} \eta \subseteq \mathcal{H}$.
Let $a$ be an arbitrary operator on $\ell_2(X)$ whose propagation is at most $R$.
Since the propagation of $a \otimes e$ is at most $R$, 
there exists a non-zero vector $\zeta \in \ell_2(X) \otimes \mathcal{H}$ 
such that $\mathrm{diam} (\mathrm{supp}(\zeta)) \le S$ and
$c \| a \otimes e\| \| \zeta \| \le \|(a \otimes e) \zeta\|$.
The vector $(1 \otimes e) \zeta$ is of the form 
$\xi \otimes \eta \in \ell_2(X) \otimes \mathcal{H}$.
We have
\begin{eqnarray*}
&\mathrm{diam} (\mathrm{supp}(\xi)) \le 
\mathrm{diam} (\mathrm{supp}(\zeta)) \le S,&\\
&c \| a \| \| \xi \|
\le c \| a \otimes e\| \|\zeta\| 
\le \| (a \otimes e)  \zeta \| 
= \| a \xi \|.&
\end{eqnarray*}
It follows that $0 < c < 1$, $R > 0$, and $S > 0$ 
satisfy the condition $(\beta)$.
We conclude that $X$ satisfies the property $(\ref{withoutamp})$.
\end{proof}

\begin{proof}[Proof of $(\ref{withoutamp}) \Rightarrow (\ref{ampforall})$]
We assume that the condition $(\beta)$ holds for 
$0 < 1 - \epsilon/2 < 1$, $R$, and $S$.
Let $a$ be an operator on $\ell_2(X) \otimes \mathcal{H}$ whose propagation is at most $R$.

We claim that there exist isometries
$V, W \colon \ell_2(X) \rightarrow \ell_2(X) \otimes \mathcal{H}$ satisfying that
$V \delta_x, W \delta_x \in \mathbb{C} \delta_x \otimes \mathcal{H}$ and
$(1 - \epsilon/2) \| a \| \le \| W^* a V \| \le \| a \| $.
Take unit vectors $\zeta_1, \zeta_2 \in \ell_2(X) \otimes \mathcal{H}$ 
such that $(1 - \epsilon/2) \| a \| \le \langle a \zeta_1, \zeta_2 \rangle$.
The vectors $\zeta_1$, $\zeta_2$ can be written in the following forms:
$\zeta_1 = \Sigma_{x \in X} f(x) \delta_x \otimes \eta_1(x)$, 
$\zeta_2 = \Sigma_{x \in X} g(x) \delta_x \otimes \eta_2(x)$, where
$\eta_1(x)$, $\eta_2(x)$ are unit vectors
and $f(x), g(x) \in \mathbb{C}$.
We define two isometries $V, W\colon \ell_2(X) \rightarrow \ell_2(X) \otimes \mathcal{H}$ by
$V \delta_x = \delta_x \otimes \eta_1(x)$, 
$W \delta_x  = \delta_x \otimes \eta_2(x)$.
Since the vectors $\zeta_1$, $\zeta_2$ are respectively in the images of $V$, $W$,
the operator norm of $W^* a V$ satisfies 
$(1 - \epsilon/2) \| a \| \le \|W^* a V\|$.
Here we get the claim.

By the condition $(\beta)$, there exists a unit vector $\xi \in \ell_2(X)$ satisfying
\begin{eqnarray*}
\mathrm{diam} (\mathrm{supp}(\xi)) \le S, \quad
(1 - \epsilon/2) \| W^* a V \| \le \| W^* a V \xi \|.
\end{eqnarray*} 
Since the support of $V \xi$ is equal to that of $\xi$,
we have $\mathrm{diam} (\mathrm{supp}(V \xi)) \le S$.
We also obtain the inequality
\begin{eqnarray*}
(1-\epsilon) \|a\| \|V \xi\|
&=& (1-\epsilon) \|a\|
\le (1 - \epsilon/2)^2 \| a \| \\
&\le& (1 - \epsilon/2) \| W^* a V \| 
\le \| W^* a V \xi \| \\
&\le& \|a V \xi\|.
\end{eqnarray*} 
We get the condition $(\beta)$ for 
$0 < 1 - \epsilon < 1$, $R > 0$, and $S > 0$.
It follows that $X$ has the property $(\ref{ampforall})$.
\end{proof}

\begin{proof}[Proof of $(\ref{withoutamp}) \Rightarrow (\ref{onlpbynorm})$]
Assume that $X$ has the property $(\ref{withoutamp})$. 
For arbitrary $\epsilon > 0$ and $R > 0$, 
there exists $S$ which satisfies the condition $(\beta)$ 
for $c = (1 + \epsilon)^{-1}$ and $R > 0$.

It follows that for every non-zero operator $a \in E_R$, 
there exists a unit vector $\xi \in \ell_2(X)$ satisfying
$\mathrm{diam} (\mathrm{supp}(\xi)) \le S$ and
$\| a \| \le (1 + \epsilon) \| a \xi \|.$
Since the propagation of $a$ is at most $R$, 
the diameter of $\mathrm{supp}(a \xi)$ is included in
the $R$-neighborhood of $\mathrm{supp}(\xi)$.
Hence there exists a unit vector $\eta$ such that 
$\| a \xi \| = \langle a \xi, \eta \rangle$ and that 
supports of $\xi$, $\eta$ are included in a common
closed ball $N(x, S + R)$.
By the inequality
\begin{eqnarray*}
\| a \| \le (1 + \epsilon) \langle a \xi, \eta \rangle
\le (1 + \epsilon) \| [a_{y,z}]_{y, z \in N(x, S + R)} \| \le (1 + \epsilon) \| \psi_{S +R} (a) \|,
\end{eqnarray*}
we get $\left\| (\psi_{S + R} |_{E_R})^{-1} \right\| \le 1 + \epsilon$.
\end{proof}

\begin{proof}[Proof of $(\ref{onlpbynorm}) \Rightarrow (\ref{withoutamp})$]
Suppose that the property $(\ref{onlpbynorm})$ holds true. 
For every $0 < c <1$ and $R > 0$,
take $S$ satisfying 
$\| (\psi_{S} |_{E_R})^{-1} \colon \psi_S(E_R) \rightarrow E_R \| < c^{-1}$.
Then for every operator $a \in E_R$, 
there exists a closed ball $N(x, S)$ with radius $S$ satisfying
\begin{eqnarray*}
c \| a \| \le \| [a_{y, z}]_{y, z \in N(x, S)} \|.
\end{eqnarray*}
Take a unit vector $\xi \in \ell_2(N(x, S))$ such that
$\| [a_{y, z}]_{y, z \in N(x, S)} \| = \| [a_{y, z}]_{y, z \in N(x, S)} \xi \|$.
The vector $\xi$ satisfies
$\mathrm{diam} (\mathrm{supp}(\xi)) \le 2S$ and $c \| a \| \le \| a \xi \|$.
It follows that the condition $(\beta)$
holds true for $0 < c <1$, $R$, and $2S$.
We conclude that $X$ has the property $(\ref{onlpbynorm})$.
\end{proof}

For the proof of Theorem \ref{main}, 
we recall the notions of completely positive map and completely bounded map.
\begin{itemize}
\item
A self-adjoint closed subspace $F$ of a unital C$^\ast$-algebra $B$
such that $1_B \in F$ is called an \textit{operator system}.
\item
A linear map $\phi$ from $F$ to a C$^\ast$-algebra $C$
is said to be \textit{completely positive} if the map
$\phi^{(n)} = \phi \otimes \mathrm{id} 
\colon F \otimes \mathbb{M}_n(\mathbb{C}) \rightarrow C \otimes \mathbb{M}_n(\mathbb{C})$
is positive for every $n$.
\end{itemize}
The subspaces $E_R \subseteq C^*_u(X)$ 
and $\psi_S(E_R) \subseteq B_S$ are examples of operator systems.
The map $\psi_S \colon E_R \rightarrow B_S$ is an example of a completely positive map. 
\begin{itemize}
\item
A linear map $\theta \colon F \rightarrow C$ is said to be \textit{completely bounded} 
if the increasing sequence $\{\|\theta^{(n)} \colon 
F \otimes \mathbb{M}_n(\mathbb{C}) 
\rightarrow C \otimes \mathbb{M}_n(\mathbb{C}) \|\}$ is bounded.
The number $\| \theta \|_{\rm cb} = \mathrm{sup}_{n \in \mathbb{N}} \| \theta^{(n)}\|$ 
is called the \textit{completely bounded norm} of $\theta$.
\end{itemize}
The norms $\| \theta \|$ 
and $\| \theta \|_{\rm cb}$ are not identical in general, but
we have the following proposition.
\begin{proposition}\label{Norm}
For every $S, R$ such that $0 < R < S$, the completely bounded norm
$\| (\psi_S |_{E_R})^{-1}  \colon \psi_S(E_R) \rightarrow E_R \|_{\rm cb} $
 is equal to $\| (\psi_S |_{E_R})^{-1}\|$.
\end{proposition}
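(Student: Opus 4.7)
The plan is to establish $\|(\psi_S|_{E_R})^{-1}\|_{\rm cb} \le \|(\psi_S|_{E_R})^{-1}\|$, since the reverse inequality holds automatically. Set $C = \|(\psi_S|_{E_R})^{-1}\|$. For each $n \in \mathbb{N}$, the amplification $\psi_S^{(n)} = \psi_S \otimes \mathrm{id}_{\mathbb{M}_n(\mathbb{C})}$ is identified with the map that takes an operator $a$ on $\ell_2(X) \otimes \mathbb{C}^n$ of propagation at most $R$ to the family of compressions onto $\ell_2(N(x,S)) \otimes \mathbb{C}^n$. Thus it suffices to prove $\|a\| \le C \|\psi_S^{(n)}(a)\|$ for every such $a$, uniformly in $n$.

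The main ingredient is the same isometry construction used in the proof of $(\ref{withoutamp}) \Rightarrow (\ref{ampforall})$. Fix $\epsilon > 0$ and pick unit vectors $\zeta_1, \zeta_2 \in \ell_2(X) \otimes \mathbb{C}^n$ with $(1-\epsilon)\|a\| \le |\langle a \zeta_1, \zeta_2 \rangle|$. Writing $\zeta_i = \sum_{x \in X} f_i(x)\, \delta_x \otimes \eta_i(x)$ with each $\eta_i(x) \in \mathbb{C}^n$ a unit vector, define isometries $V, W \colon \ell_2(X) \to \ell_2(X) \otimes \mathbb{C}^n$ by $V\delta_x = \delta_x \otimes \eta_1(x)$ and $W\delta_x = \delta_x \otimes \eta_2(x)$. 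Then $(1-\epsilon)\|a\| \le \|W^* a V\|$, and because $V$ and $W$ have propagation $0$ the operator $W^* a V$ lies in $E_R$. The hypothesis therefore yields $\|W^* a V\| \le C \|\psi_S(W^* a V)\|$.

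To finish, compare $\psi_S(W^* a V)$ with $\psi_S^{(n)}(a)$ block by block. Let $P_x$ and $P_x^{(n)}$ denote the projections onto $\ell_2(N(x, S))$ and $\ell_2(N(x, S)) \otimes \mathbb{C}^n$ respectively. Since each $V\delta_y$ lies in $\mathbb{C}\delta_y \otimes \mathbb{C}^n$, the composition $V P_x$ maps $\ell_2(N(x,S))$ isometrically into $\ell_2(N(x,S)) \otimes \mathbb{C}^n$, and analogously for $W P_x$. Hence
\[
P_x W^* a V P_x = (W P_x)^* (P_x^{(n)} a P_x^{(n)}) (V P_x),
\]
so $\|P_x W^* a V P_x\| \le \|P_x^{(n)} a P_x^{(n)}\|$; taking the supremum over $x$ gives $\|\psi_S(W^* a V)\| \le \|\psi_S^{(n)}(a)\|$. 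Chaining the inequalities and letting $\epsilon \to 0$ produces $\|a\| \le C \|\psi_S^{(n)}(a)\|$.

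The argument is essentially a repackaging of the amplification-elimination already carried out in Proposition \ref{properties}. The one point requiring care is that the isometries $V$ and $W$ must have propagation zero, which is precisely what guarantees $W^* a V \in E_R$ and permits application of the scalar-valued hypothesis; this is built into the block-diagonal prescription $V\delta_x, W\delta_x \in \mathbb{C}\delta_x \otimes \mathbb{C}^n$. Beyond this bookkeeping, no substantive obstacle is expected.
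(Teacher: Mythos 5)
Your proof is correct and follows essentially the same route as the paper: the paper also reduces to showing $\| ((\psi_S |_{E_R})^{-1})^{(n)} \| \le \| (\psi_S |_{E_R})^{-1} \|$, invokes the same propagation-zero isometries $V, W$ from the proof of $(\ref{withoutamp}) \Rightarrow (\ref{ampforall})$ (with $\mathcal{H}$ replaced by $\mathbb{C}^n$), and uses the same key inequality $\|\psi_S(W^* a V)\| \le \|\psi_S^{(n)}(a)\|$. The only differences are cosmetic (an $\epsilon$-argument instead of a constant $K$ below the norm, and your explicit block-by-block verification of the compression inequality, which the paper leaves as an observation).
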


\begin{proof}
It suffices to show that  
$\| ((\psi_S |_{E_R})^{-1})^{(n)} \| \le \| (\psi_S |_{E_R})^{-1} \|$
for every $n \in \mathbb{N}$.
Take an arbitrary positive number $K$ satisfying $K < \| ((\psi_S |_{E_R})^{-1})^{(n)} \|$.
There exists an operator $a \in E_R \otimes \mathbb{M}_n(\mathbb{C})$
satisfying $K < \| a \| $ and $\| \psi_S^{(n)}(a) \| = 1$.
We claim that there exist isometries
$V, W \colon \ell_2(X) \rightarrow \ell_2(X) \otimes \mathbb{C}^n$ satisfying that
$V \delta_x, W \delta_x \in \mathbb{C}\delta_x \otimes \mathbb{C}^n$ and
$K < \| W^* a V \| \le \| a \| $.
Indeed, the proof of
$ (\ref{withoutamp}) \Rightarrow (\ref{ampforall})$ in Proposition \ref{properties} 
also works, 
$\mathcal{H}$ being replaced by $\mathbb{C}^n$.
Observe that the propagation of $W^* a V$ is at most $R$ and
that $\| \psi_S(W^* a V) \| \le \| \psi_S^{(n)}(a) \| = 1$.
It follows that $K < \| W^* a V \| / \| \psi_S(W^* a V) \| \le \| (\psi_S |_{E_R} )^{-1} \|$.
We conclude that $\| ((\psi_S |_{E_R})^{-1})^{(n)} \| \le \| (\psi_S |_{E_R})^{-1} \|$.
\end{proof}

\section{Main theorem}

\begin{theorem}\label{main}
Let $X$ be a metric space with bounded geometry.
The space $X$ has property A if and only if
$X$ has ONL.
\end{theorem}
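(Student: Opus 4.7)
The plan is to translate between the reformulation of ONL from Proposition \ref{properties}(\ref{onlpbynorm}) and Proposition \ref{Norm}, namely $\|(\psi_S|_{E_R})^{-1}\|_{\mathrm{cb}} < 1+\epsilon$ for suitable $S$, and the positive definite kernel characterization of property A from Proposition \ref{DefOfA}(\ref{DefOfA;Kernel}). The bridge in both directions will be UCP Schur multipliers on $C^*_u(X)$.

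For the direction property A $\Rightarrow$ ONL, I would fix $\epsilon$ and $R$, take unit vectors $\xi_x \in \ell_2(X)$ from Proposition \ref{DefOfA}(\ref{DefOfA;Vectors}) supported in $N(x, S_0)$ with $\|\xi_y - \xi_z\| < \epsilon$ for $d(y,z) \le R$, and form the UCP Schur multiplier $m_k(a) := V^*(a \otimes 1)V$, where $V\delta_x := \delta_x \otimes \xi_x$. Writing $V = \sum_w W_w U_w$ with $W_w \eta := \eta \otimes \delta_w$ and $U_w := \mathrm{diag}(\xi_x(w))_{x \in X}$ (a diagonal operator supported on $N(w,S_0)$ with $\sum_w U_w^* U_w = I$), a direct calculation gives $m_k(a) = \sum_w U_w^* a U_w$. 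Since $U_w$ has range in $P_{N(w,S_0)} \ell_2(X)$, I can replace $a$ by its local compression $[a]_{w, S_0} := P_{N(w, S_0)} a P_{N(w, S_0)}$: $U_w^* a U_w = U_w^* [a]_{w, S_0} U_w$. Assembling these via the isometry $T \colon \ell_2(X) \to \bigoplus_w \ell_2(N(w, S_0))$, $T\eta := (U_w \eta)_w$, yields $m_k(a) = T^*\bigl(\bigoplus_w [a]_{w, S_0}\bigr) T$, so
\[
\|m_k(a)\| \le \sup_w \|[a]_{w, S_0}\| = \|\psi_{S_0}(a)\|.
\]
For $a \in E_R$, the matrix $a - m_k(a)$ has propagation $\le R$ and entries of modulus $\le \epsilon\|a\|$, and bounded geometry gives at most $N_R := \sup_x |N(x, R)|$ nonzero entries per row and column, so $\|a - m_k(a)\| \le N_R \epsilon \|a\|$. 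Combining, $(1 - N_R \epsilon)\|a\| \le \|\psi_{S_0}(a)\|$, which is the condition in Proposition \ref{properties}(\ref{onlpbynorm}).

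For the converse ONL $\Rightarrow$ property A, I would start with $\|(\psi_S|_{E_R})^{-1}\|_{\mathrm{cb}} \le 1+\epsilon$ and use Wittstock's extension theorem to extend the inverse to a CB map $\theta_S \colon B_S \to \mathbb{B}(\ell_2(X))$ of CB norm $\le 1+\epsilon$. The composition $\Phi_S := \theta_S \circ \psi_S$ is then CB, unital, identity on $E_R$, and annihilates matrix units $e_{y,z}$ with $d(y,z) > 2S$ (since $\psi_S$ does). Averaging $\Phi_S$ over the compact abelian group $U(\ell_\infty(X)) = \mathbb{T}^X$ of diagonal unitaries makes it an $\ell_\infty(X)$-bimodule map, hence a CB Schur multiplier $m_k$ with $k(x,x) = 1$, $k = 1$ on $\{d \le R\}$, $k = 0$ on $\{d > 2S\}$, and $\|m_k\|_{\mathrm{cb}} \le 1+\epsilon$. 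After self-adjointizing $k$, I would apply Wittstock's decomposition theorem (and a further round of $\ell_\infty$-averaging to keep the summands Schur multipliers) to write $m_k = m_{k_+} - m_{k_-}$ with $k_\pm$ positive definite and $\|m_{k_+} + m_{k_-}\|_{\mathrm{cb}} \le 1+\epsilon$. Evaluating at the identity gives $k_+(x,x) - k_-(x,x) = 1$ and $k_+(x,x) + k_-(x,x) \le 1 + \epsilon$, forcing $k_-(x,x) \le \epsilon/2$ and hence $|k_-(y,z)| \le \epsilon/2$ everywhere by the Cauchy--Schwarz inequality for positive definite kernels. Then $k_+ = k + k_-$ is positive definite, close to $1$ on $\{d \le R\}$ and of modulus $\le \epsilon/2$ on $\{d > 2S\}$; normalizing and passing to the exact-support form of the kernel characterization of property A (a standard argument for bounded geometry metric spaces) gives property A.

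The harder direction is the converse: one must convert a merely completely bounded approximate inverse into a completely positive object and extract a compactly supported positive definite kernel. Wittstock's extension and decomposition theorems combined with the $\ell_\infty(X)$-averaging are the main tools, but careful bookkeeping of constants through the averaging, decomposition and normalization steps is required so that the resulting kernel is still close enough to the identity at the prescribed scale.
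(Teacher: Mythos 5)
Your first direction (property A $\Rightarrow$ ONL) is correct and is essentially the paper's argument in different clothing: your Schur multiplier $m_k(a)=V^*(a\otimes 1)V=\sum_w U_w^* a U_w$ is exactly the paper's unital completely positive map $\phi\circ\psi_S$ with $\phi\colon B_S\to C^*_u(X)$, the factorization through $\bigoplus_w \ell_2(N(w,S_0))$ is the paper's factorization through $B_{S_0}$, and your row--column bound $\|a-m_k(a)\|\le N_R\,\epsilon\,\|a\|$ is Lemma \ref{NormConstant} with an explicit constant. No issues there.

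The converse has a genuine gap at the last step. After extending $(\psi_S|_{E_R})^{-1}$ by Wittstock, averaging to a Schur multiplier $m_k$, and applying the Wittstock decomposition $m_k=m_{k_+}-m_{k_-}$, the positive definite kernel you actually hold is $k_+=k+k_-$, which is only \emph{small} (of modulus $\le\epsilon/2$) off the $2S$-band, not zero there: the exact support of $k$ is destroyed because $k_+$ and $k_-$ individually need not vanish where their difference does. The ``standard argument'' you invoke to pass from this to the exact-support form of Proposition \ref{DefOfA}(\ref{DefOfA;Kernel}) does not exist. Indeed, the condition ``for all $R,\epsilon$ there exist $S$ and a normalized positive definite kernel with $|1-k|<\epsilon$ on $\{d\le R\}$ and $|k|<\epsilon$ on $\{d>S\}$'' is precisely the kernel characterization of coarse embeddability into Hilbert space (take $k_t=e^{-t\|f(y)-f(z)\|^2}$ for a coarse embedding $f$), and by Arzhantseva--Guentner--\v{S}pakula there are bounded geometry spaces satisfying it that fail property A; so no truncation lemma can close this gap. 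The fix is to perform the positivity correction \emph{before} composing with $\psi_S$: the paper uses Proposition \ref{Norm} plus the completely positive perturbation result (Corollary B.8/B.9 of Brown--Ozawa, itself a Wittstock-decomposition argument applied to the unital self-adjoint map $(\psi_S|_{E_R})^{-1}$ out of $\psi_S(E_R)\subseteq B_S$) to produce a genuinely unital completely positive $\phi\colon B_S\to\mathbb{B}(\ell_2(X))$ with $\|(\psi_S|_{E_R})^{-1}-\phi|_{\psi_S(E_R)}\|_{\rm cb}<\epsilon$, and only then sets $k(y,z)=\langle\phi(\psi_S(e_{y,z}))\delta_z,\delta_y\rangle$. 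Positive definiteness is then automatic from complete positivity of $\phi\circ\psi_S$, and the \emph{exact} vanishing of $k$ off the band is inherited from $\psi_S(e_{y,z})=0$, which is the structure your decomposition of the composite map loses.
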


Before proving Theorem \ref{main},
we recall a lemma in the book \cite{RoeLectureNote}.
This lemma allows us to bound operator norms by matrix coefficients.

\begin{lemma}[Lemma 4.27 in \cite{RoeLectureNote}]\label{NormConstant}
Let $X$ be a metric space with bounded geometry.
For each $R > 0$, there exists a constant $\kappa = \kappa(X, R)$ such that 
$\|a\| \le \kappa\ \mathrm{sup}_{y, z \in X} |a_{y, z}|$ holds for any $a \in E_R$.
\end{lemma}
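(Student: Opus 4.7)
The plan is to extract the constant $\kappa$ directly from the bounded-geometry hypothesis and then run a Schur-type estimate on the matrix $[a_{y,z}]$.

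First I would set $\kappa = \mathrm{sup}_{x \in X} |N(x, R)|$, which is finite by the definition of bounded geometry. For any $a \in E_R$, the coefficient $a_{y,z}$ vanishes whenever $d(y,z) > R$, so for each fixed $y$ the nonzero entries $a_{y,z}$ lie over $z \in N(y, R)$; this gives at most $\kappa$ nonzero entries in each row, and by symmetry at most $\kappa$ nonzero entries in each column. With $M := \mathrm{sup}_{y,z} |a_{y,z}|$, this yields the two bounds $\sum_{z} |a_{y,z}| \le \kappa M$ (uniformly in $y$) and $\sum_{y} |a_{y,z}| \le \kappa M$ (uniformly in $z$).

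Next I would carry out the Schur-test computation. For any $\xi \in \ell_2(X)$, applying the Cauchy--Schwarz inequality to the inner sum and using the row bound gives
\begin{eqnarray*}
\|a\xi\|^2
\,=\, \sum_{y \in X} \Bigl| \sum_{z \in X} a_{y,z}\, \xi_z \Bigr|^2
\,\le\, \kappa M \sum_{y, z \in X} |a_{y,z}|\, |\xi_z|^2
\,\le\, \kappa^2 M^2 \|\xi\|^2,
\end{eqnarray*}
where in the last step one swaps the order of summation and applies the column bound. Taking square roots yields $\|a\| \le \kappa M$, so the choice $\kappa(X, R) := \mathrm{sup}_{x \in X} |N(x, R)|$ works.

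There is essentially no obstacle: the entire content of the lemma is that bounded geometry forces the row and column sums of an operator in $E_R$ to be controlled by $\kappa \cdot \mathrm{sup}_{y,z}|a_{y,z}|$, after which the Schur test does the rest.
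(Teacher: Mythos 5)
Your proof is correct and complete. Note that the paper itself offers no proof of this statement: it is quoted verbatim as Lemma 4.27 of Roe's lecture notes, so the only possible comparison is with the argument in that reference. Roe's proof proceeds by a different (though closely related) elementary route: one decomposes the matrix $[a_{y,z}]$ of an operator of propagation at most $R$ into a sum of at most $\kappa=\mathrm{sup}_{x}|N(x,R)|$ matrices, each having at most one nonzero entry in every row and every column; each such ``partial translation'' piece has operator norm at most $\mathrm{sup}_{y,z}|a_{y,z}|$, and the triangle inequality gives the bound. Your Schur-test argument avoids the combinatorial decomposition entirely, replacing it with the observation that bounded geometry controls the row and column sums by $\kappa\,\mathrm{sup}_{y,z}|a_{y,z}|$, and it yields the same constant $\kappa$. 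Both arguments are standard and equally elementary; the Schur test is arguably cleaner to write down, while the decomposition approach makes visible that operators of finite propagation are finite sums of partial translations, a fact used elsewhere in coarse geometry. Your Cauchy--Schwarz step (splitting $|a_{y,z}|$ as $|a_{y,z}|^{1/2}\cdot|a_{y,z}|^{1/2}$ before summing) and the subsequent interchange of summation are both valid, so there is no gap.
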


We need a completely positive perturbation of the completely bounded map 
$(\psi_S|_{E_R})^{-1} \colon \psi_S(E_R) \rightarrow E_R \hookrightarrow  \mathbb{B}(\ell_2(X))$.
The following is Corollary B.8 in \cite{Ozawa:Book}:\\
Let $F \subseteq B$ be an operator system of a C$^*$-algebra $B$
and $\theta \colon F \rightarrow \mathbb{B}(\mathcal{H})$
be a unital self-adjoint map. Then, there exists a unital completely positive map
$\phi\colon B \rightarrow \mathbb{B}(\mathcal{H})$ 
satisfying $\| \theta - \phi |_F \|_{\rm cb} \le 2(\| \theta \|_{\rm cb} -1)$.\\
The statement is slightly modified, but this is what the proof actually shows.

\begin{proof}[Proof of Theorem \ref{main}]
We first assume that $X$ has property A.
Take arbitrary $R > 0$ and $\epsilon > 0$.
Let $\kappa = \kappa(X, R)$ is the constant given in Lemma \ref{NormConstant}.
By the condition (\ref{DefOfA;Vectors}) of Proposition \ref{DefOfA}, there exist unit vectors 
$\{\xi_x\}_{x \in X} \subseteq \ell_2(X)$ and a positive number $S$ satisfying
the following:
\begin{eqnarray*}
\mathrm{supp} (\xi_x) \subseteq N(x, S), \quad
| 1 - \langle \xi_y , \xi_z\rangle | 
< \epsilon/ \kappa \quad  (d(y, z) \le R).
\end{eqnarray*}
Define a linear map $\phi \colon B_S \rightarrow C^\ast_u (X)$ by 
\begin{eqnarray*}
\phi \left(\left([b^{(x)}_{y,z}]_{y, z \in N(x, S)} \right)_{x \in X} \right) 
= 
\left[ \sum_{x \in X} \xi_y(x) \overline{\xi_z(x)} b^{(x)}_{y,z} \right]_{y, z \in X}.
\end{eqnarray*}
We note that $\phi$ is unital and completely positive.

For $a \in C^\ast _u(X)$, we have 
\begin{eqnarray*}
\phi \circ \psi_S (a) 
&=& \phi \left( \left( [a_{y,z}]_{y,z \in N(x, S)} \right)_{x \in X} \right) \\
&=& \left[\sum_{x \in X} \xi_y(x) \overline{\xi_z(x)} a_{y,z} \right]_{y, z \in X} \\ 
&=& \left[ \langle \xi_y, \xi_z \rangle a_{y,z} \right]_{y, z \in X}.
\end{eqnarray*}
For $a \in E_R$, 
we get
\begin{eqnarray*}
\|a - \phi \circ \psi_S (a)\| 
&\le &  \kappa 
\left( \mathrm{sup}_{y, z \in X} |(1 - \langle \xi_y, \xi_z \rangle) a_{y,z}| \right)\\
&\le & \epsilon\ \mathrm{sup}_{y, z \in X} | a_{y,z}|\\
&\le & \epsilon\ \| a \|.
\end{eqnarray*}
This implies the following inequalities:
\begin{eqnarray*}
&\| a \|
\le  \| \phi \circ \psi_S (a)\| + \| a - \phi \circ \psi_S (a)\| 
\le  \| \psi_S (a)\| + \epsilon \|a\|,& \\
&(1 -\epsilon) \| a \| 
\le  \| \psi_S (a)\|.&
\end{eqnarray*}
It follows that the property $(\ref{onlpbynorm})$ in Proposition \ref{properties} holds true.
Hence $X$ has ONL.

Now assume that $X$ has ONL.
By Proposition \ref{properties} $(\ref{onlpbynorm})$ and Proposition \ref{Norm}, 
for any $R > 0$ and $\epsilon >0$, there exists $S > 0$ such that
$\| (\psi_S |_{E_R})^{-1} \colon \psi_S(E_R) \rightarrow E_R \|_{\rm cb} < 1 + \epsilon/2$.
It is easy to check that $(\psi_S |_{E_R})^{-1}$ is unital and self-adjoint. 
By Corollary B.9 of the book \cite{Ozawa:Book}, 
there exists a unital completely positive map
$\phi \colon B_S \rightarrow \mathbb{B}(\ell_2(X))$ which satisfies
$\| (\psi_S |_{E_R})^{-1} - \phi |_{\psi_S (E_R)} \|_{\rm cb} < \epsilon$.

Define a function $k$ on the set $X \times X$ by 
$k(y,z) = \langle \phi \circ \psi_S (e_{y,z}) \delta_z, \delta_y \rangle$,
where $e_{y, z}$ is the rank $1$ partial isometry which maps
$\delta_z$ to $\delta_y$.
Since $\phi \circ \psi_S$ is completely positive,
for every $x(1), x(2), \cdots, x(n) \in X$ and 
$\lambda_1, \lambda_2, \cdots, \lambda_n \in \mathbb{C}$,
we have
\begin{eqnarray*}
0
&\le&
\left\langle 
(\phi \circ \psi_S)^{(n)}
\left(
\left[
\begin{array}{cccc}
e_{x(1), x(1)} & \cdots & e_{x(1), x(n)} \\
\vdots & \ddots & \vdots \\
e_{x(n), x(1)} & \cdots & e_{x(n), x(n)} 
\end{array}
\right]
\right) 
\left[
\begin{array}{cccc}
\lambda_1 \delta_{x(1)}\\
\vdots\\
\lambda_n \delta_{x(n)}
\end{array}
\right] 
,
\left[
\begin{array}{cccc}
\lambda_1 \delta_{x(1)}\\
\vdots\\
\lambda_n \delta_{x(n)}
\end{array}
\right] 
\right\rangle\\
&=&
\sum_{i, j =1}^{n} \overline{\lambda_i} \lambda_j k(x(i), x(j)).
\end{eqnarray*}
It follows that 
$k$ is a positive definite kernel on $X$.
The kernel $k$ is supported on the set $\{(y, z) \in X^2 \ | \ d(y,z) \le S \}$, because
$\psi_S(e_{y,z}) = 0$ if $d(y, z) > S$.
In the case of $d(y, z) \le R$, we have 
\begin{eqnarray*}
\left| 1 - k(y, z) \right| 
= \left| \langle (e_{y,z} - \phi \circ \psi_S (e_{y,z})) \delta_z, \delta_y \rangle \right| 
\le \left\| ((\psi_S |_{E_R})^{-1} - \phi)(\psi_S (e_{y,z})) \right\| < \epsilon.
\end{eqnarray*}
It follows that $X$ satisfies the condition (\ref{DefOfA;Kernel}) in Proposition \ref{DefOfA}.
Hence $X$ has property A.
\end{proof}

The unital completely positive map $\phi$ 
in the first half of the proof 
was constructed 
in \cite[Proposition 11.41]{RoeLectureNote}.

\section{Conditions equivalent to property A}

In this revised version, 
we make comments on other coarse geometric properties.
By Theorem \ref{main} and 
a recent result \cite{BNSWW} by Brodzki, Niblo, \v{S}pakula, Willett, and Wright, 
we obtain the following theorem.
\begin{theorem}
For a metric space with bounded geometry, the following properties are equivalent:
\begin{enumerate}
\item\label{a}
property A,
\item\label{ula}
uniform local amenability (ULA$_\mu$) defined in \cite[Definition 2.5]{BNSWW},
\item\label{msp}
the metric sparsification property (MSP) defined in \cite[Definition 3.1]{ONLPoriginal},
\item\label{onl}
the operator norm localization property (ONL).
\end{enumerate}
\end{theorem}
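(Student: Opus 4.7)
The strategy is entirely to assemble known equivalences, since the work has been done either in Theorem \ref{main} above or in the cited literature.

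First, I would invoke Theorem \ref{main}, which already establishes the equivalence between property A and ONL, i.e.\ (\ref{a}) $\Longleftrightarrow$ (\ref{onl}). This is the genuinely new ingredient of the paper. Next, I would recall the equivalence MSP $\Longleftrightarrow$ ONL for spaces with bounded geometry, which was established by Chen, Tessera, Wang, and Yu in \cite{ONLPoriginal}. This gives (\ref{msp}) $\Longleftrightarrow$ (\ref{onl}).

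It remains to bring ULA$_\mu$ into the picture. Here I would cite the recent work of Brodzki, Niblo, \v{S}pakula, Willett, and Wright \cite{BNSWW}, where ULA$_\mu$ is defined (Definition 2.5 of that paper) and shown to be equivalent to MSP for metric spaces with bounded geometry. Chaining these three equivalences in the order (\ref{a}) $\Longleftrightarrow$ (\ref{onl}) $\Longleftrightarrow$ (\ref{msp}) $\Longleftrightarrow$ (\ref{ula}) closes the loop.

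There is no real obstacle: the proof is a one-paragraph bookkeeping exercise once Theorem \ref{main} is in hand. The only subtle point to watch is that all three external equivalences (A $\Longleftrightarrow$ ONL via Theorem \ref{main}, MSP $\Longleftrightarrow$ ONL via \cite{ONLPoriginal}, ULA$_\mu$ $\Longleftrightarrow$ MSP via \cite{BNSWW}) are stated under the same bounded-geometry hypothesis that appears in the statement, so the composition is legitimate. I would simply write one or two sentences citing the three references in sequence.
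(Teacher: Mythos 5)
Your overall strategy---assembling the cited results with Theorem \ref{main}---is the right one, but you over-credit the literature: the two ``external equivalences'' you invoke were not available as two-way equivalences at the time. Chen, Tessera, Wang, and Yu proved only the implication MSP $\Rightarrow$ ONL in \cite{ONLPoriginal} (whether ONL implies MSP was precisely an open question), and Brodzki, Niblo, \v{S}pakula, Willett, and Wright proved only the one-way implications property A $\Rightarrow$ ULA$_\mu$ and ULA$_\mu$ $\Rightarrow$ MSP in \cite{BNSWW}. So the chain of biconditionals you describe cannot be sourced as stated.

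The paper's proof instead arranges the available one-way implications into a cycle: (\ref{a}) $\Rightarrow$ (\ref{ula}) by \cite[Proposition 3.2]{BNSWW}, (\ref{ula}) $\Rightarrow$ (\ref{msp}) by \cite[Proposition 3.8]{BNSWW}, (\ref{msp}) $\Rightarrow$ (\ref{onl}) by Section 4 of \cite{ONLPoriginal}, and finally (\ref{onl}) $\Rightarrow$ (\ref{a}) by Theorem \ref{main}. Closing the cycle is exactly what upgrades each one-way implication to an equivalence, and it is the new implication ONL $\Rightarrow$ A that does this work---in particular, the reverse implications ONL $\Rightarrow$ MSP and MSP $\Rightarrow$ ULA$_\mu$ are \emph{consequences} of this theorem, not inputs to it. Your write-up would become correct by replacing the three claimed biconditionals with this cycle of four one-directional implications.
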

The implication (\ref{a}) $\Rightarrow$ (\ref{ula}) is proved in \cite[Proposition 3.2]{BNSWW} 
and (\ref{ula}) $\Rightarrow$ (\ref{msp}) is proved in 
\cite[Proposition 3.8]{BNSWW}.
MSP implies ONL, which is shown in section 4 of
\cite{ONLPoriginal}.
With our result (\ref{onl}) $\Rightarrow$ (\ref{a}), we get the above theorem.

For a connected infinite graph $G$ of bounded vertex degrees, 
the notion of weighted hyperfiniteness was introduced by Elek and Tim\'{a}r \cite{Elek--Timar}.
Theorem \ref{wh} gives an answer to Question 1 in \cite{Elek--Timar}.
\begin{theorem}\label{wh}
The graph $G$ is weighted hyperfinite, if and only if its vertex set $V$ equipped with 
the graph metric $d$ has property A.
\end{theorem}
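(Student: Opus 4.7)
The plan is to prove Theorem \ref{wh} by establishing the equivalence of weighted hyperfiniteness of $G$ with the metric sparsification property (MSP) of $(V, d)$, and then invoking the equivalence MSP $\Leftrightarrow$ property A established in the previous section. I will prove both directions separately.

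The implication MSP $\Rightarrow$ weighted hyperfiniteness is the easy direction. Given a probability measure $\mu$ on $V$ and $\epsilon > 0$, I would apply MSP with parameters $R = 1$ and $c = 1 - \epsilon$ to obtain a set $\Omega = \bigsqcup_i \Omega_i \subseteq V$ with $\mu(\Omega) \ge 1 - \epsilon$, each $\Omega_i$ of diameter at most some $S$, and $d(\Omega_i, \Omega_j) \ge 2$ for distinct $i, j$. Setting $W = V \setminus \Omega$ gives $\mu(W) \le \epsilon$; since $d(\Omega_i, \Omega_j) \ge 2$ forbids any $G$-edge between distinct $\Omega_i$'s, every connected component of $G \setminus W$ sits inside a single $\Omega_i$ and is therefore of size at most $\mathrm{sup}_{x \in V}|N(x, S)| < \infty$. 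This is the conclusion of weighted hyperfiniteness.

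The converse, weighted hyperfiniteness $\Rightarrow$ MSP, is the main obstacle. The difficulty is that removing a vertex set $W_0$ only produces components that are $1$-separated (since any $G$-edge between them would meet $W_0$), whereas MSP demands $R$-separation in the graph metric. My plan is to pre-inflate the measure. Set $C_R = \mathrm{sup}_{x \in V}|N(x, R)|$, which is finite, and define $\tilde\mu(x) = \mu(N(x, R))$; interchanging summation gives $\tilde\mu(V) \le C_R \mu(V)$. Given $R > 0$ and $c \in (0, 1)$, I would apply weighted hyperfiniteness to $\tilde\mu$ with tolerance $(1 - c)/C_R$, obtaining a set $W_0 \subseteq V$ with $\tilde\mu(W_0) \le (1 - c)\mu(V)$ such that the components of $G \setminus W_0$ have at most $K$ vertices.

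Then I would set $W = N(W_0, R)$ and $\Omega = V \setminus W$. The crucial estimate
\[
\mu(W) = \mu(N(W_0, R)) \le \sum_{y \in W_0} \mu(N(y, R)) = \tilde\mu(W_0) \le (1 - c)\mu(V)
\]
yields $\mu(\Omega) \ge c\mu(V)$. Define each $\Omega_i$ as a connected component of the $R$-power graph (same vertex set, edges between vertices at $d$-distance at most $R$) restricted to $\Omega$; then $d(\Omega_i, \Omega_j) > R$ by construction. The key geometric observation is that each $\Omega_i$ is contained in a single component of $G \setminus W_0$: if $x, y \in \Omega$ satisfy $d(x, y) \le R$, then any $G$-geodesic from $x$ to $y$ has length at most $R$, so every vertex $z$ on it lies in $N(x, R)$; since $x \in \Omega$ gives $d(x, W_0) > R$, the triangle inequality forces $d(z, W_0) > 0$ and thus $z \notin W_0$. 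Hence $|\Omega_i| \le K$ and $\mathrm{diam}(\Omega_i) \le K - 1$, establishing MSP with $S = K - 1$. Combining the two implications with the equivalence MSP $\Leftrightarrow$ property A concludes the proof.
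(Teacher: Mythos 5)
Your proof is correct, and the overall architecture --- reduce the theorem to the equivalence of weighted hyperfiniteness with MSP and then quote MSP $\Leftrightarrow$ property A --- is exactly the paper's. The difference is in how the two arguments handle the only real difficulty, namely upgrading the $1$-separation of components of $G \setminus W$ to the $R$-separation demanded by MSP. The paper outsources this: it invokes quasi-isometry invariance of weighted hyperfiniteness (Proposition 3.1 of Elek--Tim\'ar) to conclude that the power graph $G_R$ is weighted hyperfinite, and then removing a small set from $G_R$ automatically produces pieces that are $R$-separated and of bounded diameter in $d$. You instead keep everything in $G$ itself and compensate by inflating the measure, setting $\tilde\mu(x) = \mu(N(x,R))$ and deleting the full $R$-neighborhood $N(W_0,R)$ of the removed set; the geodesic argument showing that each $R$-power component of the complement sits inside one component of $G \setminus W_0$ is the correct replacement for the quasi-isometry step. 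Your route is longer but self-contained (it does not need Elek--Tim\'ar's invariance result), and it yields MSP with constant $c$ arbitrarily close to $1$ directly. Two small remarks: in the converse direction you apply MSP ``with $c = 1-\epsilon$,'' which tacitly uses the fact that the MSP constant can be amplified to be arbitrarily close to $1$ (shown in Section 3 of Chen--Tessera--Wang--Yu); the paper is equally brief here, calling this direction ``by definition,'' but it is worth citing. Also, when you apply weighted hyperfiniteness to $\tilde\mu$ you should normalize it to a probability measure first, which costs nothing since $\tilde\mu(V) \le C_R\,\mu(V)$.
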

\begin{proof}
Weighted hyperfiniteness is invariant under 
quasi-isometry by Proposition 3.1 in \cite{Elek--Timar}.
Suppose that $G$ is weighted hyperfinite. Then the graph $G_R$ 
with the same vertex set $V$ and the edge set $\{(x, y) \in V \times V \ |\ d(x, y) < R\}$ 
is also weighted hyperfinite for every $R \ge 1$.
It is routine to prove that $G$ has MSP.
Conversely, MSP of $G$ implies weighted hyperfiniteness by definition.
It follows that weighted hyperfiniteness 
and MSP are equivalent.
\end{proof}
We thank Professor Ozawa for letting us know the paper \cite{Elek--Timar}.

\bibliographystyle{amsplain.bst}
\bibliography{onlp.bib}

\end{document}